\title[A categorical study on the generalized type semigroup]{A categorical study on the generalized type semigroup}
\thanks{}
\theoremstyle{plain}
\newtheorem{Thm}{Theorem}[section]
\theoremstyle{definition}
\theoremstyle{plain}
\newtheorem{thm}[Thm]{Theorem}
\newtheorem{lem}[Thm]{Lemma}
\newtheorem{cor}[Thm]{Corollary}
\newtheorem{prop}[Thm]{Proposition}
\theoremstyle{definition}
\newtheorem{defn}[Thm]{Definition}
\newtheorem{rmk}[Thm]{Remark}
\newcommand{\B}{B}
\newcommand{\A}{A}
\newcommand{\J}{J}
\newcommand{\K}{\mathcal{K}}
\newcommand{\D}{D}
\newcommand{\Ch}{D}
\newcommand{\Zh}{\mathcal{Z}}
\newcommand{\E}{E}
\newcommand{\Oh}{\mathcal{O}}
\newcommand{\T}{{\mathbb T}}
\newcommand{\N}{{\mathbb N}}
\newcommand{\Z}{{\mathbb Z}}
\newcommand{\C}{{\mathbb C}}
\newcommand{\Q}{{\mathbb Q}}
\newcommand{\aut}{\mathrm{Aut}}
\newcommand{\supp}{\mathrm{supp}}
\newcommand{\eps}{\varepsilon}
\numberwithin{equation}{section}
\newcommand{\id}{\mathrm{id}}
\newcommand{\halpha}{\widehat{\alpha}}
\newcommand{\calpha}{\widehat{\alpha}}
\newcommand{\tih}{\widetilde {h}}
\newcommand\set[1]{\left\{#1\right\}}  
\newcommand\mset[1]{\left\{\!\!\left\{#1\right\}\!\!\right\}}
\newcommand{\CA}[0]{\mathcal{A}} \newcommand{\CB}[0]{\mathcal{B}}
\newcommand{\CC}[0]{\mathcal{C}} \newcommand{\CD}[0]{\mathcal{D}}
\newcommand{\CG}[0]{\mathcal{G}} \newcommand{\CH}[0]{\mathcal{H}}
\newcommand{\CK}[0]{\mathcal{K}} 
\newcommand{\CO}[0]{\mathcal{O}} 
\newcommand{\CQ}[0]{\mathcal{Q}} 
 \newcommand{\CT}[0]{\mathcal{T}}
\newcommand{\CW}[0]{\mathcal{W}}
\newcommand{\Ra}[0]{\Rightarrow}
\newcommand{\La}[0]{\Leftarrow}
\newcommand{\LRa}[0]{\Leftrightarrow}
\newcommand{\quer}[0]{\overline}
\newcommand{\eins}[0]{\mathbf{1}}			
\newcommand{\diag}[0]{\operatorname{diag}}
\newcommand{\ad}[0]{\operatorname{Ad}}
\newcommand{\ev}[0]{\operatorname{ev}}
\newcommand{\fin}[0]{{\subset\!\!\!\subset}}
\newcommand{\diam}[0]{\operatorname{diam}}
\newcommand{\Hom}[0]{\operatorname{Hom}}
\newcommand{\dst}[0]{\displaystyle}
\newcommand{\spp}[0]{\operatorname{supp}}
\newcommand{\lsc}[0]{\operatorname{Lsc}}
\newcommand{\del}[0]{\partial}
\newcommand{\GU}[0]{\CG^{(0)}}
\newcommand{\cu}[0]{\operatorname{Cu}}
\theoremstyle{definition}
\numberwithin{equation}{Thm}
\begin{document}
\global\long\def\floorstar#1{\lfloor#1\rfloor}
\global\long\def\ceilstar#1{\lceil#1\rceil}	

\global\long\def\B{B}
\global\long\def\A{A}
\global\long\def\J{J}
\global\long\def\K{\mathcal{K}}
\global\long\def\D{D}
\global\long\def\Ch{D}
\global\long\def\Zh{\mathcal{Z}}
\global\long\def\E{E}
\global\long\def\Oh{\mathcal{O}}

\global\long\def\T{{\mathbb{T}}}
\global\long\def\BR{{\mathbb{R}}}
\global\long\def\N{{\mathbb{N}}}
\global\long\def\Z{{\mathbb{Z}}}
\global\long\def\C{{\mathbb{C}}}
\global\long\def\Q{{\mathbb{Q}}}

\global\long\def\aut{\mathrm{Aut}}
\global\long\def\supp{\mathrm{supp}}

\global\long\def\eps{\varepsilon}

\global\long\def\id{\mathrm{id}}

\global\long\def\halpha{\widehat{\alpha}}
\global\long\def\calpha{\widehat{\alpha}}

\global\long\def\tih{\widetilde{h}}

\global\long\def\opFol{\operatorname{F{\o}l}}

\global\long\def\opRange{\operatorname{Range}}

\global\long\def\opIso{\operatorname{Iso}}

\global\long\def\dimnuc{\dim_{\operatorname{nuc}}}

\global\long\def\set#1{\left\{  #1\right\}  }


\global\long\def\mset#1{\left\{  \!\!\left\{  #1\right\}  \!\!\right\}  }

\global\long\def\Ra{\Rightarrow}
\global\long\def\La{\Leftarrow}
\global\long\def\LRa{\Leftrightarrow}

\global\long\def\quer{\overline{}}
\global\long\def\eins{\mathbf{1}}
\global\long\def\diag{\operatorname{diag}}
\global\long\def\ad{\operatorname{Ad}}
\global\long\def\ev{\operatorname{ev}}
\global\long\def\fin{{\subset\!\!\!\subset}}
\global\long\def\diam{\operatorname{diam}}
\global\long\def\Hom{\operatorname{Hom}}
\global\long\def\dst{{\displaystyle }}
\global\long\def\spp{\operatorname{supp}}
\global\long\def\spo{\operatorname{supp}_{o}}
\global\long\def\del{\partial}
\global\long\def\lsc{\operatorname{Lsc}}
\global\long\def\GU{\CG^{(0)}}
\global\long\def\HU{\CH^{(0)}}
\global\long\def\AU{\CA^{(0)}}
\global\long\def\BU{\CB^{(0)}}
\global\long\def\CUU{\CC^{(0)}}
\global\long\def\DU{\CD^{(0)}}
\global\long\def\QU{\CQ^{(0)}}
\global\long\def\TU{\CT^{(0)}}
\global\long\def\CUUU{\CC'{}^{(0)}}

\global\long\def\AUl{(\CA^{l})^{(0)}}
\global\long\def\BUl{(B^{l})^{(0)}}
\global\long\def\HUp{(\CH^{p})^{(0)}}

\global\long\def\properlength{proper}

\global\long\def\interior#1{#1^{\operatorname{o}}}
	
\author{Xin Ma}
\email{xma1@memphis.edu}
\address{Department of Mathematics,
	University of Memphis,
	Memphis, TN, 38152}

\subjclass[2010]{46L35, 37B05}
\keywords{}

\date{June 19, 2022.}

	
\begin{abstract}
In this short note, we show that the generalized type semigroup $\CW(X, \Gamma)$ introduced by the author in \cite{M3} belongs to the category \textnormal{W} introduced in \cite{APT}. In particular, we demonstrate that $\CW(X, \Gamma)$  satisfies axioms (W1)-(W4) and (W6). When $X$ is zero-dimensional, we also establish (W5) for the semigroup. This supports the analogy between the generalized type semigroup and  pre-completed Cuntz semigroup $W(\cdot)$ for $C^*$-algebras.
\end{abstract}
\maketitle

\section{Introduction}
The Cuntz semigroup $W(A)$ and its complete version $\operatorname{Cu}(A)$ of a $C^*$-algebra $A$ have become important invariants in the structure theory of $C^*$-algebras.   A categorical study of (completed) Cuntz semigroup $\operatorname{Cu}(\cdot)$ was initiated in \cite{CEI} by introducing the category Cu for semigroups. Then the pre-completed version parallel to the category Cu, called the category W, corresponding to the pre-completed Cuntz semigroup $W(\cdot)$ for $C^*$-algebras, was introduced in \cite{APT}. The properties of the categories Cu and W as well as their relation with the category of $C^*$-algebras was further developed  in, e.g., \cite{APT}, \cite{APT1} and \cite{APT2}.  In particular, like in the original $C^*$-setting that $\operatorname{Cu}(A)=W(A\otimes \CK)$, as a ``completion'' of $W(A)$, it was shown in \cite{APT} that every W-semigroup $S$ can be completed to a $\cu$-semigroup $\gamma(S)$ in the sense of \cite[Definition 3.1.7]{APT} by using a standard process described in \cite[Proposition 3.1.6]{APT}. Here,  $\gamma$ is the completion functor  witnessed that category $\operatorname{Cu}$ is a full reflexive subcategory of the category W. See  \cite[Section 3]{APT} for more details. 

In \cite{CEI}, four structural axioms (O1)-(O4) have been introduced for a positively partially  ordered monoid $S$ to become a $\operatorname{Cu}$-semigroup. Additional 
axioms (O5) and (O6) for a $\operatorname{Cu}$-semigroup $S$ were introduced in \cite{RW} and \cite{R}, and also considered in \cite{APT}. Roughly speaking, axiom (O5) measures how far the order in $S$ is from being the algebraic order, whilst axiom (O6) measures the Riesz decomposition present in $S$. We remark that the Cuntz semigroup $\operatorname{Cu}(A)$ satisfies (O1)-(O6) for any $C^*$-algebra $A$. See, e.g., \cite{CEI} and \cite[Proposition 4.6]{APT}. The category W was defined in \cite{APT} by means of four axioms (W1)-(W4), parallel to (O1)-(O4). Additional axioms (W5) and (W6) were also introduced, to play the role of (O5) and (O6) for these uncompleted semigroups; see Section 2 and 3 below.

Then a natural question is that, besides $C^*$-algebras, are there any other sources that $\cu$- and W-semigroups may come from? An obvious candidate is the class of the \textit{generalized type semigroups} introduced by the author in \cite{M3} as a dynamical analogue of Cuntz semigroups of $C^*$-algebras. In general, it seems that a generalized type semigroup does not come as a Cuntz semigroup of a $C^*$-algebra.

Suppose that $\alpha: \Gamma\curvearrowright X$ is a continuous action of a countably infinite discrete group $\Gamma$ on a locally compact Hausdorff second countable space $X$. The generalized type semigroup $\CW(X, \Gamma)$ was originally constructed to study the almost unperforation form of the dynamical comparison, as a dynamical analogue of strict comparison in the $C^*$-setting. Dynamical comparison was first introduced by Winter in 2012 and refined by Kerr in \cite{D} to study regularity properties of the reduced crossed product $C^*$-algebra $C_0(X)\rtimes_r \Gamma$. In this short note, we further investigate the generalized type semigroup $\CW(X, \Gamma)$ from a categorical viewpoint. In particular, we show that $\CW(X, \Gamma)$ belongs to the category W by showing that it satisfies the axioms (W1)-(W4) and thus its completion $\gamma(\CW(X, \Gamma))$ belongs to the category Cu. We also establish (W6) for $\CW(X, \Gamma)$ and thus  the completion $\gamma(\CW(X, \Gamma))$ satisfies (O6) by \cite[Theorem 4.4]{APT}. In addition, when $X$ is zero-dimensional, we establish (W5) for $\CW(X, \Gamma)$ and thus  the completion $\gamma(\CW(X, \Gamma))$ satisfies (O5) by \cite[Theorem 4.4]{APT} as well. We begin with recalling the definition of $\CW(X, \Gamma)$. We remark that, unlike the original definition in \cite{M3}, we allow our space $X$ to be locally compact instead of only being compact. Such a version has appeared in \cite{M4} written in the language of \'{e}tale groupoid. It appears very plausible that the results in this note can be generalized to the setting of locally compact Hausdorff \'{e}tale groupoids. In this note, we use notations ``$A\sqcup B$'' and ``$\bigsqcup\CA$'' for disjoint unions.

\begin{defn}\label{defn: subequivalence relation}
Let $O_1,\dots, O_n$ and $V_1,\dots, V_m$ be two sequences of open sets in $X$, We write \[\bigsqcup_{i=1}^n O_i\times \{i\}\preccurlyeq \bigsqcup_{l=1}^m V_l\times \{l\}\]
	if for every $i\in\{1,\dots, n\}$ and every compact set $F_i\subset O_i$ there are a collection of open sets, $\mathcal{U}_i=\{U^{(i)}_1,\dots, U^{(i)}_{J_i}\}$ forming a cover of $F_i$, $s^{(i)}_1,\dots, s^{(i)}_{J_i}\in \Gamma$ and  $k_1^{(i)},\dots, k_{J_i}^{(i)}\in \{1,\dots, m\}$ such that
	\[\bigsqcup_{i=1}^n\bigsqcup_{j=1}^{J_i}s^{(i)}_jU^{(i)}_j\times \{k^{(i)}_j\}\subset \bigsqcup_{l=1}^m V_l\times \{l\}.\]
We allow the empty set $\emptyset$ to appear as one or more of the open sets in Definition \ref{defn: subequivalence relation} with no harm. In fact we make sense of $\emptyset\preccurlyeq \bigsqcup_{l=1}^m V_l\times \{l\}$ for any $m\in \N^+$ and open sets $V_l$. Note that the empty set $\emptyset$ above could be interpreted as $\bigsqcup_{i=1}^n \emptyset\times \{i\}$ for any $n\in \N^+$. We also emphasize that each $V_l$ above could also be the empty set. 
\end{defn}

Denote by $\CO_n(X)$ for the set $\{(O_1,\dots, O_n): O_i\ \text{is open in }X\ \text{for any } i\leq n\}$. Write $\CK(X, \Gamma)=\bigcup_{n=1}^\infty \CO_n(X)$. We also denote by $\CO(X)=\CO_1(X)$ the collection of all open sets in $X$ for simplicity. Note that the relation in Definition \ref{defn: subequivalence relation} is defined on $\CK(X, \Gamma)$. The following simple but fundamental result was established in \cite{M3}.

\begin{lem}\cite[Lemma 2.2]{M3}\label{lem:transitivity}
The relation ``$\preccurlyeq$'' on $\CK(X, \Gamma)$ is transitive.
\end{lem}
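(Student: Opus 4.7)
The plan is to chase the definition with careful bookkeeping, composing the two witnesses. Suppose the two sub-equivalences
\[\bigsqcup_{i=1}^n O_i \times \{i\} \preccurlyeq \bigsqcup_{l=1}^m V_l \times \{l\} \preccurlyeq \bigsqcup_{k=1}^p W_k \times \{k\}\]
hold, and fix an arbitrary compact $F_i \subset O_i$ for each $i$. Applying the first relation produces a finite open cover $\{U_j^{(i)}\}_{j=1}^{J_i}$ of $F_i$ together with $s_j^{(i)} \in \Gamma$ and indices $k_j^{(i)} \in \{1,\dots,m\}$ realizing the disjoint inclusion into $\bigsqcup_l V_l \times \{l\}$.

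The first real step is to manufacture compact pieces inside each $V_l$ so that the second relation can be fed compact subsets. Using the standard shrinking lemma in a locally compact Hausdorff space, I would replace each $U_j^{(i)}$ by a smaller open $U_j^{(i)\prime}$ whose closure is compact with $\overline{U_j^{(i)\prime}} \subset U_j^{(i)}$ and such that $\{U_j^{(i)\prime}\}_j$ still covers $F_i$. Setting $F_{i,j} := F_i \cap \overline{U_j^{(i)\prime}}$, each $F_{i,j}$ is compact, $F_i = \bigcup_j F_{i,j}$, and $s_j^{(i)} F_{i,j} \subset V_{k_j^{(i)}}$. Pooling these translates gives a compact set $K_l := \bigcup_{(i,j):\,k_j^{(i)} = l} s_j^{(i)} F_{i,j} \subset V_l$ (taking $K_l = \emptyset$ if no such pair exists). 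Applying the second relation to these $K_l$ produces, for each $l$, a finite open cover $\{T_q^{(l)}\}_{q=1}^{Q_l}$ of $K_l$, elements $t_q^{(l)} \in \Gamma$ and indices $r_q^{(l)} \in \{1,\dots,p\}$ with $\bigsqcup_{l,q} t_q^{(l)} T_q^{(l)} \times \{r_q^{(l)}\} \subset \bigsqcup_k W_k \times \{k\}$. For each admissible triple $(i,j,q)$ I would then set $\tilde U_{j,q}^{(i)} := U_j^{(i)} \cap (s_j^{(i)})^{-1} T_q^{(k_j^{(i)})}$ and attach to it the composed group element $t_q^{(k_j^{(i)})} s_j^{(i)}$ and the index $r_q^{(k_j^{(i)})}$. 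Since $F_{i,j} \subset U_j^{(i)}$ and $s_j^{(i)} F_{i,j} \subset K_{k_j^{(i)}} \subset \bigcup_q T_q^{(k_j^{(i)})}$, the collection $\{\tilde U_{j,q}^{(i)}\}_{j,q}$ is an open cover of $F_i$.

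The main obstacle is the disjointness check for
\[\bigsqcup_{i,j,q} t_q^{(k_j^{(i)})} s_j^{(i)} \tilde U_{j,q}^{(i)} \times \{r_q^{(k_j^{(i)})}\} \subset \bigsqcup_k W_k \times \{k\},\]
which is really a careful accounting rather than a new idea. The inclusion itself is immediate, since $t_q^{(k_j^{(i)})} s_j^{(i)} \tilde U_{j,q}^{(i)} \subset t_q^{(k_j^{(i)})} T_q^{(k_j^{(i)})}$. For disjointness, two pieces with a common target index split into two cases. If they arise from the same pair $(l,q) = (k_j^{(i)},q)$ but different $(i,j)$, the disjointness of $\{s_j^{(i)} U_j^{(i)}\}$ inside $V_l$ supplied by the first relation descends to the refined sets and is preserved after translation by $t_q^{(l)}$. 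Otherwise they come from distinct pairs $(l,q)$, and disjointness of $\{t_q^{(l)} T_q^{(l)}\}$ supplied by the second relation forces them apart inside the appropriate $W_k$. Combining both cases gives the desired witness and completes the transitivity argument.
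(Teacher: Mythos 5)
Your argument is correct and is essentially the proof the paper relies on (it cites \cite[Lemma 2.2]{M3}, where transitivity is proved by exactly this composition of witnesses): shrink the first cover so that its closures are compact, push these compact pieces into the $V_l$ to feed the second relation, intersect the two covers, and compose the group elements. The disjointness bookkeeping in your two cases is the right way to verify the composed family is still a disjoint union inside $\bigsqcup_k W_k\times\{k\}$.
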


Then Lemma \ref{lem:transitivity} allows one to define  an equivalence relation on $K(X, \Gamma)$ by setting $a\approx b$ if $a\preccurlyeq b$ and $b\preccurlyeq a$ for $a,b\in \CK(X, \Gamma)$. Thus we have the following definition.

\begin{defn}(\cite{M3})
Write  $\CW(X, \Gamma)$ for the quotient $\CK(X, \Gamma)/\approx$ and define an operation on $\CW(X, \Gamma)$ by $[a]+[b]=[(a,b)]$, where $(a,b)$ is defined to be the concatenation of $a=(O_1,\dots, O_n)$ and $b=(U_1,\dots, U_m)$, i.e., $(a,b)=(O_1,\dots, O_n,U_1,\dots, U_m)$. It can be additionally verified that this operation is well defined and commutative, i.e, $[a]+[b]=[b]+[a]$.  Moreover, we endow $\CW(X, \Gamma)$ with the natural order by declaring $[a]\leq [b]$ if $a\preccurlyeq b$. Thus $\CW(X, \Gamma)$ is a well-defined commutative partially ordered semigroup.
\end{defn}

Let $S$ be a monoid and $\leq$  a preorder on $S$. We say $(S, \leq)$ is a \textit{preordered monoid} if $\leq$ is translation-invariant, i.e., $a\leq b$ implies that $a+c\leq b+c$ for any $a,b,c\in S$. If, in addition, $0\leq a$ holds for any $a\in S$, we say $(S, \leq)$ is a \textit{positively ordered monoid}.  Moreover, if $\leq$ is a partial order, we say $(S, \leq)$ is a \textit{positively partially ordered monoid}. For $a\in S$ and $<$ a transitive relation on $S$, we denote by $a^<$ the set $\{b\in S:b<a\}$.

\begin{rmk}\label{rmk: monoid structure on K(X, Gamma)}
 It is direct to see that our $\CW(X, \Gamma)$ is a positively partially ordered monoid. In addition, if we slightly abuse the notation as we mentioned in Definition \ref{defn: subequivalence relation} for the empty set $\emptyset$, the pair $(\CK(X, \Gamma), \preccurlyeq)$ itself is a positively ordered monoid with respect to the addition defined as $a+b=(a, b)$.
\end{rmk}

\section{The category \textnormal{W}}
In this section, we establish that each $\CW(X, \Gamma)$ belongs to the category \textnormal{W} by proving that it satisfies the axioms (W1)-(W4). We begin with introducing a new  relation on $\CK(X, \Gamma)$. In general, let $(S, \leq)$ be a positively ordered monoid and $a, b\in S$. We say $a$ is \textit{way-below} $b$, denoted by $a\ll b$, if whenever $\{b_n: n\in \N\}$ is an increasing sequence in $S$ such that the supremum $\sup_nb_n$ exists, then $b\leq \sup_nb_n$ implies that there is an $m\in \N$ such that $a\leq b_m$. Equipped $\CO(X)$ with the usual inclusion relation,  it is not hard to see $O\ll U$ in $(\CO(X), \subset)$ is equivalent to that $O\subset K\subset U$ for some compact set $K$. One can extend way-below relation to $\CO_n(X)$ as follows. We say that a set $A$ in $X$ is precompact if its closure $\overline{A}$ is compact. Let $a=(O_1, \dots, O_n), b=(U_1,\dots, U_n)\in \CO_n(X)$. Then $a$ is way-below $b$ in $\CO_n(X)$, denoted by $a\ll_n b$, if and only if each $O_i$ is precompact and $\overline{O_i}\subset U_i$ for any $1\leq i\leq n$. The following was established in \cite{M4}. See also \cite[Proposition 3.2]{M3}.

\begin{prop}\label{prop: relation}\cite[Proposition 4.6]{M4}
	Let $a\in \CO_n(X)$ and $b\in \CO_m(X)$. Then the following are equivalent.
	\begin{enumerate}[label=(\roman*)]
		\item $a\preccurlyeq b$;
		
		\item $c\preccurlyeq b$ for any $c\in \CO_n(X)$ with $c\ll_n a$;
		
		\item for any $c\in \CO_n(X)$ with $c\ll_n a$ there is a $d\in\CO_m(X)$ with $d\ll_m b$ such that $c\preccurlyeq d$.
	\end{enumerate}
\end{prop}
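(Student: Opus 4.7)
The plan is a circular chain (i) $\Rightarrow$ (ii) $\Rightarrow$ (iii) $\Rightarrow$ (i), with the real work concentrated in (ii) $\Rightarrow$ (iii). Throughout write $a = (O_1, \dots, O_n)$ and $b = (V_1, \dots, V_m)$, and abbreviate the components of a tuple $c$ as $O_i^c$.

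The implication (i) $\Rightarrow$ (ii) is immediate from the definition: if $c = (O_1', \dots, O_n') \ll_n a$ and $F_i'$ is a compact subset of $O_i'$, then $\overline{O_i'} \subset O_i$ gives $F_i' \subset O_i$, so the covers and translates furnished by $a \preccurlyeq b$ applied to the $F_i'$ directly witness $c \preccurlyeq b$. For (iii) $\Rightarrow$ (i), given compact $F_i \subset O_i$, use local compactness of $X$ to pick precompact open $P_i$ with $F_i \subset P_i \subset \overline{P_i} \subset O_i$, so that $c := (P_1, \dots, P_n) \ll_n a$. By (iii) there is $d = (W_1, \dots, W_m) \ll_m b$ with $c \preccurlyeq d$. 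Since $\overline{W_l} \subset V_l$, the trivial choice (identity translation, index $l$) shows $d \preccurlyeq b$, so $c \preccurlyeq b$ by Lemma~\ref{lem:transitivity}. Applying the definition of $c \preccurlyeq b$ to the compact sets $F_i \subset P_i = O_i^c$ then furnishes exactly the data required to witness $a \preccurlyeq b$ for the original $F_i$.

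The core of the argument is (ii) $\Rightarrow$ (iii). Given $c \ll_n a$, first use local compactness to interpose a tuple $c'$ with $c \ll_n c' \ll_n a$; by (ii), $c' \preccurlyeq b$. Apply the definition of $\preccurlyeq$ with the compact sets $\overline{O_i^c} \subset O_i^{c'}$ to obtain, for each $i$, a finite open cover $\{U_j^{(i)}\}_{j=1}^{J_i}$ of $\overline{O_i^c}$, group elements $s_j^{(i)} \in \Gamma$, and indices $k_j^{(i)} \in \{1, \dots, m\}$ with
\[
\bigsqcup_{i,j} s_j^{(i)} U_j^{(i)} \times \{k_j^{(i)}\} \subset \bigsqcup_l V_l \times \{l\}.
\]
Using local compactness and compactness of $\overline{O_i^c}$, shrink each $U_j^{(i)}$ to a precompact open $\widetilde{U}_j^{(i)}$ with $\overline{\widetilde{U}_j^{(i)}} \subset U_j^{(i)}$ in such a way that $\{\widetilde{U}_j^{(i)}\}_j$ still covers $\overline{O_i^c}$. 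Define $W_l = \bigcup \{\, s_j^{(i)} \widetilde{U}_j^{(i)} : k_j^{(i)} = l\,\}$; then $\overline{W_l}$ is a finite union of the compact sets $s_j^{(i)} \overline{\widetilde{U}_j^{(i)}}$, each contained in $V_l$, so $\overline{W_l} \subset V_l$ and $d := (W_1, \dots, W_m) \ll_m b$. The relation $c \preccurlyeq d$ then follows because for any compact $F_i'' \subset O_i^c$ one has $F_i'' \subset \overline{O_i^c}$, so the shrunken family with the same translates $s_j^{(i)}$ and labels $k_j^{(i)}$ realises the required disjoint containment inside $\bigsqcup_l W_l \times \{l\}$.

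The step I expect to be the main obstacle is the bookkeeping in the shrinking used for (ii) $\Rightarrow$ (iii): one must verify that the shrunken family still covers $\overline{O_i^c}$ (a standard consequence of local compactness applied to a finite open cover of a compact set) and that the new translates $s_j^{(i)} \widetilde{U}_j^{(i)}$ remain disjoint in $X \times \{1, \dots, m\}$, which is automatic from $s_j^{(i)} \widetilde{U}_j^{(i)} \subset s_j^{(i)} U_j^{(i)}$. Once these points are in place, the containment $\overline{W_l} \subset V_l$ uses only that $\Gamma$ acts by homeomorphisms, so $s_j^{(i)} \overline{\widetilde{U}_j^{(i)}} = \overline{s_j^{(i)} \widetilde{U}_j^{(i)}}$, and the rest of the verification is routine.
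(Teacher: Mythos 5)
Your proof is correct. Note that the paper itself does not prove this proposition --- it is quoted as \cite[Proposition 4.6]{M4} (see also \cite[Proposition 3.2]{M3}) --- so there is no in-paper argument to compare against; your chain (i) $\Rightarrow$ (ii) $\Rightarrow$ (iii) $\Rightarrow$ (i) is the natural one, and the key step (interposing $c \ll_n c' \ll_n a$, applying $c' \preccurlyeq b$ to the compact sets $\overline{O_i^c}$, and shrinking the resulting finite covers to precompact open sets whose translates assemble into a $d \ll_m b$) is exactly the standard argument for statements of this type. The two points you flag as delicate --- that the shrunken family still covers $\overline{O_i^c}$ and that disjointness of the translated pieces is inherited by subsets --- are handled correctly.
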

We  recall the following definition of an auxiliary relation as it appeared in Paragraph 2.1.1 in \cite{APT} for positively ordered monoids introduced in Section B.2 in \cite{APT}. We also remark that the notion of auxiliary relation originated in the work of continuous lattices and domains. See \cite{G}.
\begin{defn}\cite{APT}
Let $(S, \leq)$ be a positively ordered monoid. An \textit{auxiliary relation} on $(S, \leq)$ is a binary relation $\prec$ such that the following hold:
\begin{enumerate}[label=(\roman*)]
	\item $a\prec b$ implies $a\leq b$ for any $a, b\in S$.
	\item $a\leq b\prec c\leq d$ implies $a\prec d$ for any $a, b, c, d\in S$.
	\item $0\prec a$ for any $a\in S$.
\end{enumerate}
\end{defn}
An element $x\in S$ is said to be \textit{compact} if $x\prec x$.  We next recall axioms (W1)-(W4) introduced in Paragraph 2.1.1 in \cite{APT} for a positively partially ordered monoid $S$. But one may still consider these axioms in the context of preordered monoids.
\begin{enumerate}
	\item[(W1)] For any $a\in S$, there is a sequence $(a_k)_k$ in $a^\prec$ satisfying $a_k\prec a_{k+1}$ for each $k$ and such that for any $b\in a^\prec$ there is a $k$ with $b\prec a_k$.
	\item[(W2)] For any $a\in S$, one has $a=\sup a^\prec$.
	\item[(W3)] If $a', a, b', b\in S$ satisfy $a'\prec a$ and $b'\prec b$, then $a'+b'\prec a+b$. 
	\item[(W4)] If $a, b, c\in S$ satisfy $a\prec b+c$, then there exist $b', c'\in S$ such that $a\prec b'+c'$, $b'\prec b$ and $c'\prec c$.
\end{enumerate}
The category W consists of all positively partially ordered monoids $S$ equipped with an auxiliary relation $\prec$ satisfying (W1)-(W4). We refer such a semigroup as a W-semigroup.

\begin{lem}\label{lem: uniform}
	Let $(S, \preccurlyeq)$ be a positively ordered monoid equipped with another transitive relation $\ll$ satisfying
	\begin{enumerate}
		\item $0\ll a$ for any $a\in S$, and
		\item $\preccurlyeq$ is weaker than $\ll$.
	\end{enumerate}
 Let $\prec$ be a new relation on $S$ defining by $a\prec b$ in $S$ if there is an element $c\in S$ such that $a\preccurlyeq c\ll b$. Suppose $S$ satisfies  that $a\preccurlyeq b$ if and only if $a^{\ll}\subset b^{\prec}$. Then one has 
	\begin{enumerate}[label=(\roman*)]
		\item $\prec$ is auxiliary for $\preccurlyeq$.
		\item If $(S, \ll)$ satisfies \textnormal{(W1), (W3)} and \textnormal{(W4)}, then $(S, \prec)$ satisfies \textnormal{(W1)-(W4).}
	\end{enumerate}
\end{lem}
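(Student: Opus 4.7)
The plan is to verify the three axioms of an auxiliary relation in (i), then each of (W1)--(W4) in (ii), using the hypothesis $a\preccurlyeq b \Leftrightarrow a^{\ll}\subset b^{\prec}$ as the main lever. Throughout, the definition $a\prec b \Leftrightarrow \exists c,\ a\preccurlyeq c\ll b$ is unpacked and repacked repeatedly.

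For (i), the relation $0\prec a$ follows from $0\preccurlyeq 0\ll a$ using reflexivity of $\preccurlyeq$ together with hypothesis (1). The implication $a\prec b\Rightarrow a\preccurlyeq b$ follows because $a\preccurlyeq c\ll b$ and hypothesis (2) ($\preccurlyeq$ weaker than $\ll$) give $a\preccurlyeq c\preccurlyeq b$. The two-sided compatibility $a\preccurlyeq b\prec c\preccurlyeq d\Rightarrow a\prec d$ splits in two. The left half is routine: $a\preccurlyeq b\preccurlyeq e\ll c$ immediately yields $a\prec c$. The right half $a\prec c\preccurlyeq d\Rightarrow a\prec d$ is the only subtle piece of (i): writing $a\preccurlyeq e\ll c$ and applying the biconditional to $c\preccurlyeq d$ gives $e\in c^{\ll}\subset d^{\prec}$, so $e\preccurlyeq f\ll d$ for some $f$, and transitivity produces $a\preccurlyeq f\ll d$, i.e.\ $a\prec d$.

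For (ii), fix $a\in S$ and take once and for all a $\ll$-cofinal sequence $(a_k)$ as granted by (W1) for $\ll$, with $a_k\ll a_{k+1}\ll a$. Reflexivity of $\preccurlyeq$ promotes each $a_k\ll a_{k+1}$ to $a_k\prec a_{k+1}$, and likewise $a_k\prec a$; cofinality in $a^{\prec}$ follows by unpacking $b\prec a$ as $b\preccurlyeq c\ll a$, picking $k$ with $c\ll a_k$ from $\ll$-cofinality, and reassembling $b\preccurlyeq c\ll a_k$. This gives (W1). For (W2), part (i) already makes $a$ an upper bound of $a^{\prec}$; for the least-upper-bound property, let $b$ dominate $a^{\prec}$. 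By the biconditional it suffices to show $a^{\ll}\subset b^{\prec}$: given $x\ll a$, choose $k$ with $x\ll a_k$, note that $a_k\prec a$ so $a_k\preccurlyeq b$, and apply the biconditional a second time to $a_k\preccurlyeq b$ to conclude $x\in a_k^{\ll}\subset b^{\prec}$. For (W3), I would split $a'\prec a$, $b'\prec b$ as $a'\preccurlyeq c\ll a$, $b'\preccurlyeq d\ll b$, use translation-invariance of $\preccurlyeq$ for $a'+b'\preccurlyeq c+d$, and apply (W3) for $\ll$ to get $c+d\ll a+b$. For (W4), write $a\preccurlyeq e\ll b+c$ and apply (W4) for $\ll$ to $e\ll b+c$ to obtain $b',c'$ with $e\ll b'+c'$, $b'\ll b$, $c'\ll c$; then $b'\prec b$, $c'\prec c$ because $\ll\Rightarrow\prec$, while $a\preccurlyeq e\ll b'+c'$ reads as $a\prec b'+c'$.

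The main obstacle is the least-upper-bound half of (W2). Converting the plain $\preccurlyeq$-domination relation $a_k\preccurlyeq b$ into a $\prec$-statement about elements $x\ll a_k$ is not formal and genuinely requires the biconditional hypothesis, applied twice: once to reduce the target $a\preccurlyeq b$ to the inclusion $a^{\ll}\subset b^{\prec}$, and once to lift $a_k\preccurlyeq b$ into $a_k^{\ll}\subset b^{\prec}$. The same biconditional is what powers the right-compatibility in (i); outside of these two points, every verification is bookkeeping around the decomposition $a\prec b \Leftrightarrow \exists c,\ a\preccurlyeq c\ll b$ and the corresponding axioms for $\ll$.
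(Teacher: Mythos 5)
Your proposal is correct and follows essentially the same route as the paper: unpack $\prec$ via $a\preccurlyeq c\ll b$, use the biconditional $a\preccurlyeq b\Leftrightarrow a^{\ll}\subset b^{\prec}$ for the right-compatibility in (i) and for the least-upper-bound half of (W2), and transport (W1), (W3), (W4) from $\ll$ to $\prec$ by reflexivity and translation-invariance of $\preccurlyeq$. The only cosmetic difference is in (W2), where the paper first isolates the claim that $a\preccurlyeq b$ iff $a^{\prec}\subset b^{\prec}$ and writes $a^{\prec}=\bigcup_k a_k^{\prec}$, whereas you inline that step by verifying $a^{\ll}\subset b^{\prec}$ directly; the two arguments are interchangeable.
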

\begin{proof}
We first establish (i). Let $a\prec b$ in $S$. Then, by definition, there is an element $c\in S$ such that $a\preccurlyeq c\ll b$. Now since the relation $\preccurlyeq$ is weaker than $\ll$, one actually has $a\preccurlyeq c\preccurlyeq b$ and thus $a\preccurlyeq b$. Then, let $a\preccurlyeq b\prec c\preccurlyeq d$ in $S$. By definition, there is an $e\in S$ such that $a\preccurlyeq b\preccurlyeq e\ll c\preccurlyeq d$. Now,  $c\preccurlyeq d$ implies that $c^{\ll}\subset d^\prec$ and thus $e\in d^\prec$. These entail that there is an $f\in S$ such that $a\preccurlyeq e\preccurlyeq f\ll d$ and therefore one has $a\prec d$ by definition. Finally, for any $a\in S$, one has that $0\preccurlyeq 0\ll a$, which means $0\prec a$. These have shown $\prec$ is an auxiliary relation for $(S, \preccurlyeq)$.

Now for (ii),  suppose $(S, \ll)$ satisfies (W1), (W3) and (W4). Since $(S, \ll)$ satisfies (W1), for any $a\in S$, there is a $\ll$-increasing sequence $\{a_k: k\in \N\}$ in $a^\ll$ such that for any $b\in a^\ll$ there is a $k\in \N$ such that $b\ll a_k$. Note that $\ll$ is stronger than $\prec$ by definition and thus  $\{a_k: k\in \N\}$ is actually a $\prec$-increasing sequence in $a^\prec$. Suppose $b\in a^\prec$. then there is an element $c\in S$ such that $b\preccurlyeq c\ll a$, which implies that there is a $k\in \N$ such that $c\ll a_k$. This means $b\prec a_k$ and thus (W1) holds for $(S, \prec)$.

Note that (W2) for $(S, \prec)$ follows from the comments in \cite[Remark 2.6]{APT2}. But, to make the proof self-contained, we establish (W2) for $(S, \prec)$ here as follows.
We first claim that $a\preccurlyeq b$ if and only if $a^\prec\subset b^\prec$. Indeed, if $a\preccurlyeq b$, then for any $c\prec a$, one has $c\prec b$ as well because we have shown above that $\prec$ is auxiliary for $\preccurlyeq$. This implies that $a^\prec\subset b^\prec$. For the other direction, note that $a^\prec\subset b^\prec$ implies $a^\ll\subset b^\prec$, which is equivalent to $a\preccurlyeq b$ by our hypothesis. Now, for any $a\in S$, it is easy to see $a$ is an upper bound for $a^\prec$. On the other hand, (W1) for $\prec$ above implies that $a^\prec=\bigcup_{k=1}^\infty a_k^\prec$ for a suitable sequence $\{a_k: k\in\N\}\subset a^\prec$. Now let $b$ be another upper bound for $a^\prec$, which implies that $a_k^\prec\subset b^\prec$ for any $k\in \N$. Therefore, $a^\prec\subset b^\prec$ holds, which means $a\preccurlyeq b$ and thus (W2) holds for $\prec$.

For (W3), let $a', a, b', b\in S$ such that $a'\prec a$ and $b'\prec b$. Then there are $c, d\in S$ such that $a'\preccurlyeq c\ll a$ and $b'\preccurlyeq d\ll b$. Now since $\ll$ satisfies (W3), one has $a'+b'\preccurlyeq c+d\ll a+b$, which implies $a'+b'\prec a+b$. This means (W3) holds for $\prec$.

Finally, for (W4), let $a\prec b+c$ in $S$. Then there is an element $d\in S$ such that $a\preccurlyeq d\ll b+c$. Now because (W4) holds for $\ll$, there are $b', c'\in S$ satisfying  $b'\ll b$, $c'\ll c$ and $d\ll b'+c'$. Note that these show that $a\prec b'+c'$, $b'\prec b$ and $c'\prec c$ as desired.
\end{proof}

Define a relation $\ll$ on $(\CK(X, \Gamma), \preccurlyeq)$ by setting $a\ll b$ if $a, b\in \CO_n(X)$ for some $n\in \N$ and $a\ll_n b$.  It is direct to see $\ll$ satisfies conditions (1) and (2) in Lemma \ref{lem: uniform} with respect to $\preccurlyeq$ as in Proposition \ref{prop: relation}. Then define another relation $\prec$ on $\CK(X, \Gamma)$ by setting  $a\prec b$ if there is an element $c\in \CK(X, \Gamma)$ such that $a\preccurlyeq c\ll b$. 

\begin{thm}\label{thm: K is W semigroup}
	 $\CK(X, \Gamma)$ satisfies \textnormal{(W1)-(W4)} with respect to the $\prec$.
\end{thm}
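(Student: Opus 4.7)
The plan is to apply Lemma \ref{lem: uniform} to $S = \CK(X, \Gamma)$ with $\preccurlyeq$ and $\ll$ as just defined, since conclusion (ii) of that lemma is exactly what the theorem asks for. The comments preceding the theorem already record conditions (1) and (2) of Lemma \ref{lem: uniform}; it remains to verify the hypothesis $a \preccurlyeq b \Leftrightarrow a^\ll \subset b^\prec$ together with (W1), (W3), (W4) for $(\CK(X, \Gamma), \ll)$.

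The biconditional is essentially a reformulation of Proposition \ref{prop: relation}. Indeed, $a^\ll \subset b^\prec$ says that for every $c \ll a$ there exists $d \ll b$ with $c \preccurlyeq d$, which is precisely the equivalence (i)$\Leftrightarrow$(iii) of that proposition once one unpacks the definition of $\prec$.

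For (W1), I would exploit that $X$ is locally compact Hausdorff and second countable. Given $a = (O_1, \dots, O_n) \in \CO_n(X)$, I exhaust each $O_i$ by precompact open sets $O_i^{(1)} \subset \overline{O_i^{(1)}} \subset O_i^{(2)} \subset \overline{O_i^{(2)}} \subset \cdots$ with $\bigcup_k O_i^{(k)} = O_i$; such an exhaustion is standard from second countability combined with local compactness. Setting $a_k := (O_1^{(k)}, \dots, O_n^{(k)}) \in \CO_n(X)$, one obtains $a_k \ll a_{k+1}$ and $a_k \ll a$ for every $k$. If $b = (U_1, \dots, U_n) \ll a$, then each $\overline{U_i}$ is compact and contained in $O_i$, hence in some $O_i^{(k_i)}$; taking $k = \max_i k_i$ yields $b \ll a_k$.

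Axioms (W3) and (W4) for $\ll$ are essentially componentwise. For (W3), concatenation preserves the componentwise precompact inclusions $\overline{O_i} \subset U_i$. For (W4), if $a = (W_1, \dots, W_{n+m}) \ll b + c$ with $b = (V_1, \dots, V_n)$ and $c = (U_1, \dots, U_m)$, then $\overline{W_i} \subset V_i$ for $i \leq n$ and $\overline{W_{n+j}} \subset U_j$ for $j \leq m$; local compactness lets me interpolate precompact open sets $V_i'$ and $U_j'$ with $\overline{W_i} \subset V_i' \subset \overline{V_i'} \subset V_i$ and similarly for $U_j'$. Then $b' := (V_1', \dots, V_n') \ll b$, $c' := (U_1', \dots, U_m') \ll c$, and $a \ll b' + c'$. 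The only genuinely substantive input is the exhaustion used for (W1); everything else is a direct unpacking of definitions combined with Proposition \ref{prop: relation}, and the conclusion then follows from Lemma \ref{lem: uniform}(ii).
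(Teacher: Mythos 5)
Your proposal is correct and follows essentially the same route as the paper: both reduce the theorem to Lemma \ref{lem: uniform}(ii) by verifying the biconditional $a\preccurlyeq b \Leftrightarrow a^\ll\subset b^\prec$ via Proposition \ref{prop: relation} and then checking (W1), (W3), (W4) for $(\CK(X,\Gamma),\ll)$ through exhaustion and interpolation by precompact open sets. The only cosmetic difference is that you justify the exhaustion in (W1) by second countability plus local compactness where the paper cites the $K_\sigma$ property of open sets and normality of $X$; these are interchangeable here.
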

\begin{proof}
We first verify that $a\preccurlyeq b$ is equivalent to $a^\ll\subset b^\prec$ in $\CK(X, \Gamma)$. Let $a\in \CO_n(X)$ and $b\in \CO_m(X)$ be $a\preccurlyeq b$.  Let $c\ll_n a$ and then Proposition \ref{prop: relation} implies that there is an element $d\ll_m b$ such that $c\preccurlyeq d$, which means that $c\prec b$. This shows that $a^\ll\subset b^\prec$. For the converse, suppose $a^\ll\subset b^\prec$ holds. Now let $c\ll_n a$. One then has $c\prec b$, which means that there is an element $d\in \CO_m(X)$ such that $c\preccurlyeq d\ll_m b$. Therefore, one has $c\preccurlyeq b$ and Proposition \ref{prop: relation} shows that $a\preccurlyeq b$. Now, Lemma \ref{lem: uniform}(i) entails that $\prec$ is auxiliary for $\preccurlyeq$ in $\CK(X, \Gamma)$.

We then verify $(\CK(X, \Gamma), \ll)$ satisfies (W1), (W3) and (W4). Suppose $a=(O_1,\dots, O_n)\in \CO_n(X)$. Using the facts that each $O_i$ is a $K_\sigma$ set and  the space $X$ is normal, there is an increasing sequence $\{U^i_k:  k\in \N\}$ of open sets such that  each $U^i_k$ is precompact satisfying $\overline{U^i_k}\subset U^i_{k+1}$ and $O_i=\bigcup_{k=1}^\infty U^i_k$ for each $i\leq n$. Then define $a_k=(U^1_k,\dots, U^n_k)$ for $k\in \N$. By our construction, $a_k\ll a_{k+1}$ holds and observe that each $a_k\in a^{\ll}$. Now, let $b=(V_1, \dots, V_n)\ll a$, which means $V_i$ is precompact and $\overline{V_i}\subset O_i$ for each $i\leq n$. Now since $O_i=\bigcup_{k=1}^\infty U^i_k$, there is a big enough $k\in \N$ such that $\overline{V_i}\subset U^i_k$ for each $i\leq n$. This implies that $b\ll a_k$ and establishes (W1) for $\ll$.
  
For (W3),  let $a'\ll a$ and $b'\ll b$ with $a\in \CO_n(X)$ and $b\in \CO_m(X)$. Then simply observe that $(a', b')\ll_{m+n} (a, b)$ by definition, which implies that $a'+b'\ll a+b$. This establishes (W3). For (W4),  let $a, b, c\in \CK(X, \Gamma)$ such that $b=(U_1,\dots, U_n)\in \CO_n(X)$ and $c=(V_1,\dots, V_m)\in \CO_m(X)$ satisfying $a\ll b+c=(b, c)$. Then, one necessarily has $a=(O_1,\dots, O_{m+n})\in \CO_{m+n}(X)$ for some precompact open sets $O_i$ such that $\overline{O_i}\subset U_i$ for all $1\leq i\leq n$ and $\overline{O_i}\subset V_i$ for all $n+1\leq i\leq n+m$. Write $d_1=(O_1, \dots, O_n)$ and $d_2=(O_{n+1}, \dots, O_{n+m})$. For $1\leq i\leq n$, choose precompact open sets $M_i$ such that $\overline{O_i}\subset M_i\subset \overline{M_i}\subset U_i$. In addition, for $n+1\leq i\leq n+m$, choose precompact open sets $N_i$ such that $\overline{O_i}\subset N_i\subset \overline{N_i}\subset V_i$. Define $b'=(M_1, \dots, M_n)$ and $c'=(N_1, \dots, N_m)$. Our construction implies that $a=(d_1, d_2)\ll_{n+m} (b', c')$ and $b'\ll_n b$ as well as $c'\ll_m c$. This establishes (W4).

Then, Lemma \ref{lem: uniform}(ii) implies that $(\CK(X, \Gamma), \prec)$ satisfies (W1)-(W4).
\end{proof}

Now we define a relation $\preceq$ on $\CW(X, \Gamma)$ by $[a]\preceq [b]$ if $a\prec b$ in $\CK(X, \Gamma)$. This relation is well-defined. Indeed, suppose $a\prec b$ holds in $\CK(X, \Gamma)$ and $b\in \CO_n(X)$. Then there is an element $c\in \CO_n(X)$ such that $a\preccurlyeq c\ll_n b$. Now let $a_1\approx a$ and $b_1\approx b$ in $\CK(X, \Gamma)$ where $b_1\in \CO_m(X)$. Then first one has $a_1\preccurlyeq a\preccurlyeq c\ll_n b\preccurlyeq b_1.$
Then  Proposition \ref{prop: relation} implies that there is a $d\in \CO_m(X)$ with $d\ll_m b_1$ such that $c\preccurlyeq d\ll_m b_1$. Then Lemma \ref{lem:transitivity} implies that $a_1\preccurlyeq d\ll_m b_1$. This shows that $\preceq$ is well-defined.

\begin{cor}\label{cor: main1}
	The positively partially ordered monoid $\CW(X, \Gamma)$ is a \textnormal{W}-semigroup. 
\end{cor}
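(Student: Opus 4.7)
The plan is to transfer the axioms from $(\CK(X, \Gamma), \prec)$ to the quotient $(\CW(X, \Gamma), \preceq)$ along the map $a \mapsto [a]$. By Remark \ref{rmk: monoid structure on K(X, Gamma)}, $\CW(X, \Gamma)$ is already a positively partially ordered monoid, so it remains to check that $\preceq$ is an auxiliary relation for $\leq$ and that it satisfies (W1)--(W4). That $\preceq$ is auxiliary for $\leq$ descends immediately from the analogous statement for $\prec$ on $\CK(X, \Gamma)$, which was obtained via Lemma \ref{lem: uniform}(i) in the course of Theorem \ref{thm: K is W semigroup}; the well-definedness of $\preceq$ shown just before the corollary is what allows one to read $[b] \preceq [a]$ as ``$b \prec a$ for any chosen representatives''.

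For (W1), given $[a] \in \CW(X, \Gamma)$, I would apply (W1) for $(\CK(X, \Gamma), \prec)$ to a representative $a$ to obtain a $\prec$-increasing sequence $(a_k)$ in $a^\prec$ with the absorption property. Passing to classes produces a $\preceq$-increasing sequence $([a_k])$ in $[a]^\preceq$; any $[b] \preceq [a]$ unfolds to $b \prec a$, hence $b \prec a_k$ for some $k$, i.e.\ $[b] \preceq [a_k]$. For (W2), that $[a]$ is an upper bound of $[a]^\preceq$ is clear. If $[c]$ is any other upper bound, then every $b \in \CK(X, \Gamma)$ with $b \prec a$ satisfies $[b] \leq [c]$, i.e.\ $b \preccurlyeq c$, so $c$ is an upper bound of $a^\prec$ in $(\CK(X, \Gamma), \preccurlyeq)$; (W2) for $\CK(X, \Gamma)$ then yields $a \preccurlyeq c$, equivalently $[a] \leq [c]$.

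For (W3), from $[a'] \preceq [a]$ and $[b'] \preceq [b]$ one reads off $a' \prec a$ and $b' \prec b$ in $\CK(X, \Gamma)$; (W3) for $\CK(X, \Gamma)$ gives $a' + b' \prec a + b$, which descends to $[a'] + [b'] \preceq [a] + [b]$ because addition in $\CW(X, \Gamma)$ is induced by concatenation. For (W4), $[a] \preceq [b] + [c] = [(b, c)]$ unfolds to $a \prec (b, c)$, and (W4) in $\CK(X, \Gamma)$ produces $b', c'$ with $b' \prec b$, $c' \prec c$, and $a \prec b' + c'$; taking classes gives the required decomposition in $\CW(X, \Gamma)$. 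No real obstacle remains: the substantive content was already carried out in Theorem \ref{thm: K is W semigroup} and in the verification that $\preceq$ is well-defined, and the corollary is essentially a bookkeeping check that those axioms pass through the quotient by $\approx$.
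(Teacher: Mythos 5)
Your proposal is correct and follows essentially the same route as the paper: the paper's own proof simply notes that $[a]\leq[b]$ iff $a\preccurlyeq b$ and $[a]\preceq[b]$ iff $a\prec b$ (using the well-definedness of $\preceq$ established just before the corollary) and then invokes Theorem \ref{thm: K is W semigroup} to transfer the auxiliary-relation property and (W1)--(W4) to the quotient. Your write-up just makes that bookkeeping explicit axiom by axiom.
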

\begin{proof}
Let $[a], [b]\in \CW(X, \Gamma)$. Recall that, by definition, $[a]\leq [b]$ if and only if $a\preccurlyeq b$ in $\CK(X, \Gamma)$. In addition, $[a]\preceq [b]$ holds if and only if $a\prec b$ in $\CK(X, \Gamma)$. Then, using Theorem \ref{thm: K is W semigroup}, it is direct to see $\preceq $ is auxiliary for $\leq$ and (W1)-(W4) hold for $\preceq$ in $\CW(X, \Gamma)$.
\end{proof}

\section{Additional axioms}
A positively partially ordered monoid $S$ is called a Cu-semigroup if $(S, \leq)$ satisfies the axioms (O1)-(O4), introduced in \cite{CEI}, with respect to the way-below relation $\ll$. Roughly speaking, axiom (O1) means that every increasing sequence $\{a_n: n\in\N\}$ in $S$ has a supremum, whilst (O2) means that every $a\in S$ is the supremum of a $\ll$-increasing sequence. Axiom (O3) says that the addition and $\ll$ are compatible, and (O4) stands for compatibility of suprema and addition. See more detail in \cite{CEI}. As we have mentioned in the introduction, there is a functor $\gamma: \textnormal{W}\to \textnormal{Cu}$ so that for each W-semigroup $S$, the assigned Cu-semigroup $\gamma(S)$ serves as a completion of $S$ in the sense of \cite[Definition 3.1.7]{APT} by using \cite[Proposition 3.1.6]{APT}. Therefore, Corollary \ref{cor: main1} implies that $\gamma(\CW(X, \Gamma))$ is a Cu-semigroup.

Besides axioms (W1)-(W4), there are other axioms that a W-semigroup may satisfy. The following are also introduced in \cite{APT}, and are referred to as axioms (W5) and (W6). Let $(S, \leq, \prec)$ be a W-semigroup with an auxiliary relation $\prec$.
\begin{enumerate}
	\item[(W5)] For every $a', a, b', b, c, \tilde{c}$ that satisfy
	\[a+b\prec c,\ \  a'\prec a,\ \ b'\prec b,\ \ c\prec \tilde{c} \]
	there exist elements $x'$ and $x\in S$ such that 
	\[a'+x\prec \tilde{c},\ \ c\prec a+x', \ \ b'\prec x'\prec x.\]
	\item[(W6)] For every $a', a, b, c\in S$ that satisfy $a'\prec a\prec b+c$, there exist elements $e, f\in S$ such that $a'\prec e+f$, $e\prec a, b$ and $f\prec a, c$.
\end{enumerate}

We next show that $\CW(X, \Gamma)$  always satisfies (W6) with respect to the order $\preceq$.
\begin{prop}
	$\CW(X, \Gamma)$ satisfies \textnormal{(W6)}. 
\end{prop}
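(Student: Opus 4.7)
The plan is to lift to $\CK(X,\Gamma)$ and construct $e, f$ by splitting a witness covering for $a \prec b+c$ according to whether target indices lie in the $b$- or $c$-block, with a small extra shrinking step arranged so that $a' \prec e+f$ (not just $a' \preccurlyeq e+f$) drops out of the same construction.

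First I would unfold the hypothesis in $\CK(X,\Gamma)$. Write $a = (O_1,\dots,O_n)$, $b = (U_1,\dots,U_p)$, $c = (V_1,\dots,V_q)$. From $a' \prec a$ obtain $a_0 = (O^0_1,\dots,O^0_n)$ with $a' \preccurlyeq a_0 \ll_n a$; from $a \prec b+c$ obtain $d = (W_1,\dots,W_{p+q})$ with $a \preccurlyeq d \ll_{p+q} b+c$, and split $d = b_0 + c_0$ where $b_0 = (W_1,\dots,W_p) \ll_p b$ and $c_0 = (W_{p+1},\dots,W_{p+q}) \ll_q c$. Apply the definition of $\preccurlyeq$ to $a \preccurlyeq d$ at the compact sets $F_j := \overline{O^0_j} \subset O_j$ to obtain a cover $\{U^{(j)}_s\}_{s=1}^{J_j}$ of $F_j$ (after intersecting with $O_j$, without loss of generality $U^{(j)}_s \subset O_j$), group elements $s^{(j)}_s \in \Gamma$ and target indices $k^{(j)}_s \in \{1,\dots,p+q\}$. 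Partition the indices into $J^b_j = \{s : k^{(j)}_s \leq p\}$ and $J^c_j = \{s : k^{(j)}_s > p\}$, and set $Y^b_j = \bigcup_{s \in J^b_j} U^{(j)}_s$, $Y^c_j = \bigcup_{s \in J^c_j} U^{(j)}_s$, so that $F_j \subset Y^b_j \cup Y^c_j \subset O_j$.

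Exploiting local compactness of $X$, I would then shrink twice: choose precompact open sets $H^b_j, G^b_j, H^c_j, G^c_j$ with $\overline{H^b_j} \subset G^b_j$, $\overline{G^b_j} \subset Y^b_j$, $\overline{H^c_j} \subset G^c_j$, $\overline{G^c_j} \subset Y^c_j$ and $F_j \subset H^b_j \cup H^c_j$, and set $e := (G^b_1,\dots,G^b_n)$, $f := (G^c_1,\dots,G^c_n)$, $h := (H^b_1,\dots,H^b_n,H^c_1,\dots,H^c_n)$. The four easy clauses would then follow at once. For $e \prec b$: for any compact $K \subset G^b_j$, the restricted family $\{U^{(j)}_s : s \in J^b_j\}$ (with inherited $s^{(j)}_s$ and $k^{(j)}_s$) still covers $K$ and witnesses $e \preccurlyeq b_0 \ll_p b$; symmetrically $f \prec c$. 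For $e \prec a$: $\overline{G^b_j} \subset Y^b_j \subset O_j$ gives $e \ll_n a$, hence $e \prec a$; similarly $f \prec a$. Finally $h \ll_{2n} e+f$ holds by construction.

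The main obstacle is establishing $a' \prec e+f$, which I would obtain by showing $a' \preccurlyeq h$, so that $a' \preccurlyeq h \ll_{2n} e+f$ yields $a' \prec e+f$ by the very definition of $\prec$. Given compact $K_l \subset a'_l$, the witness for $a' \preccurlyeq a_0$ produces $\{V^{(l)}_i\}_i$, $t^{(l)}_i \in \Gamma$ and $m^{(l)}_i \in \{1,\dots,n\}$ with $\bigsqcup_{l,i} t^{(l)}_i V^{(l)}_i \times \{m^{(l)}_i\} \subset \bigsqcup_j O^0_j \times \{j\}$ disjointly. Since $t^{(l)}_i V^{(l)}_i \subset O^0_{m^{(l)}_i} \subset \overline{O^0_{m^{(l)}_i}} = F_{m^{(l)}_i} \subset H^b_{m^{(l)}_i} \cup H^c_{m^{(l)}_i}$, the open sets
\[A^{(l)}_i := V^{(l)}_i \cap (t^{(l)}_i)^{-1}(H^b_{m^{(l)}_i}), \quad B^{(l)}_i := V^{(l)}_i \cap (t^{(l)}_i)^{-1}(H^c_{m^{(l)}_i})\]
cover $V^{(l)}_i$. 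Assigning $A^{(l)}_i$ the translation $t^{(l)}_i$ and target index $m^{(l)}_i$ in $h$ (landing in $H^b_{m^{(l)}_i}$), and $B^{(l)}_i$ the translation $t^{(l)}_i$ and target index $m^{(l)}_i + n$ (landing in $H^c_{m^{(l)}_i}$), the containment is immediate; the real work is the disjointness check, which however is automatic since splitting a single original target level $k$ into two new target levels $k$ and $k+n$ cannot merge previously disjoint families, and the refinements $A^{(l)}_i, B^{(l)}_i \subset V^{(l)}_i$ only shrink. Passing to the quotient $\CW(X,\Gamma)$ then delivers (W6) with respect to $\preceq$.
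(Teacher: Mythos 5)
Your proof is correct and follows essentially the same strategy as the paper's: both interpolate $a' \preccurlyeq a_0 \ll a \preccurlyeq d \ll b+c$, apply the definition of $\preccurlyeq$ to the compact closures of the coordinates of the first interpolant, and split the resulting cover into $e$ and $f$ according to whether the target index lands in the $b$-block or the $c$-block. The only (immaterial) difference is at the last step: the paper deduces $[a']\preceq [e]+[f]$ via the intermediate element $(e_1\cup f_1,\dots,e_m\cup f_m)$ together with the auxiliary property of $\preceq$, whereas you perform an extra shrinking to produce an explicit witness $h$ with $a'\preccurlyeq h\ll e+f$.
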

\begin{proof}
Let $a'\in \CO_n(X), a=(U_1, \dots, U_m)\in \CO_m(X), b=(B_1, \dots, B_k)\in  \CO_k(X)$ and $c=(B_{k+1}, \dots, B_{k+l})\in \CO_l(X)$ such that $[a']\preceq [a]\preceq [b]+[c]$ in $\CW(X, \Gamma)$. Then there is an element $d=(O_1, \dots, O_m)\in \CO_m(X)$ with each $O_i$ precompact such that $a'\preccurlyeq d\ll_m a$. In addition, there is an element $g=(C_1, \dots, C_{k+l})\in \CO_{k+l}(X)$ with each $C_j$ precompact such that $a\preccurlyeq g\ll_{k+l} (b, c)$. Then since $d\ll_m a$ and $a\preccurlyeq g$, and using normality of the space $X$, we may choose precompact open sets $\{V^i_j: 1\leq i\leq m, 1\leq j\leq J_i\}$, group elements $\{s^i_j\in \Gamma: 1\leq i\leq m, 1\leq j\leq J_i\}$ and integers $\{1\leq p^i_j\leq k+l: 1\leq i\leq m, 1\leq j\leq J_i\}$ such that $\overline{O_i}\subset \bigcup_{j=1}^{J_i}V^i_j\subset \bigcup_{j=1}^{J_i}\overline{V^i_j}\subset U_i$ for each $1\leq i\leq m$ and 
\[\bigsqcup_{i=1}^m\bigsqcup_{j=1}^{J_i}s^i_j\overline{V^i_j}\times \{p^i_j\}\subset \bigsqcup_{q=1}^{k+l}C_q\times \{q\}.\]
Now  define $e_i=\bigcup\{V^i_j: p^i_j\leq k, 1\leq j\leq J_i \}$ and $f_i=\bigcup\{V^i_j: p^i_j>k, 1\leq j\leq J_i \}$ for each $i\leq m$ and set $e=(e_1, \dots, e_m)$ and $f=(f_1, \dots, f_m)$, which are elements in $\CO_m(X)$.  Then one has $e_i\cup f_i=\bigcup_{j=1}^{J_i}V^i_j\subset \bigcup_{j=1}^{J_i}\overline{V^i_j}\subset U_i $ for each $i\leq m$, which implies that $e\ll_m a$ and $f\ll_m a$ and thus both $[e], [f]\preceq [a]$. In addition, since  $C_q$ are precompact and $\overline{C_q}\subset B_q$ for all $q\leq k+l$ and the set \[\bigsqcup\{s^i_j\overline{V^i_j}\times \{p^i_j\}: 1\leq i\leq m, 1\leq j\leq J_i, p^i_j\leq k\}\]
is a subset in $\bigsqcup_{q=1}^{k}C_q\times \{q\}$, one actually has $[e]\preceq [b]$. The same argument shows that $[f]\preceq [c]$.

Finally, by our construction, for each $i\leq m$ one has $\overline{O_i}\subset \bigcup_{j=1}^{J_i}V^i_j=e_i\cup f_i$. Write $t=(e_1\cup f_1, \dots, e_m\cup f_m)$ for simplicity and one has $a'\preccurlyeq d\ll_m t\preccurlyeq (e, f)$. This implies that $[a']\preceq [t]\leq [e]+[f]$. Since $\preceq$ is auxiliary, one actually has $[a']\preceq [e]+[f]$.
\end{proof}

Finally, under the additional assumption that $X$ is zero-dimensional (which means that the topology on $X$ has a basis consisting of compact open sets), we show that $\CW(X, \Gamma)$ satisfies (W5). In this case, one even may choose $[x']=[x]$ as compact elements in $\CW(X, \Gamma)$ with respect to $\preceq$.

\begin{prop}
If $X$ is zero-dimensional, then $\CW(X, \Gamma)$ satisfies \textnormal{(W5)}.
\end{prop}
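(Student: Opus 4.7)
The plan is to exhibit $x' = x$ as a single compact element of $(\CW(X, \Gamma), \preceq)$. In the zero-dimensional setting, any tuple of compact open subsets of $X$ is compact in $\CK(X, \Gamma)$: componentwise $\overline{K} = K$ gives $x \ll x$, whence $x \prec x$ by the auxiliary property. So $x' \prec x$ becomes automatic, leaving only the three remaining inequalities of (W5) to be checked. Using zero-dimensionality to replace every precompact open approximation by a compact open set with the required closure containment, extract compact open tuples $a''$, $b''$, $\hat c$ satisfying $a' \preccurlyeq a'' \ll a$, $b' \preccurlyeq b'' \ll b$, $c \preccurlyeq \hat c \ll \tilde c$, and, by composition with the hypothesis $a + b \prec c$, a subequivalence $(a, b) \preccurlyeq \hat c$.

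\paragraph{Construction of $x$.}
Apply the subequivalence $(a, b) \preccurlyeq \hat c$ to the compact open sets $A''_I$ (the components of $a''$) and $B''_J$ (the components of $b''$), taking each cover element compact open inside the enclosing $A_I$ or $B_J$ by zero-dimensionality. This produces labeled families $\mathcal A = \{s^I_J U^I_J \times \{p^I_J\}\}$ and $\mathcal B = \{t^{I'}_{J'} V^{I'}_{J'} \times \{q^{I'}_{J'}\}\}$ of compact open sets inside the components of $\hat c$. For each label $k \in \{1, \dots, s\}$, define
\[ x_k \;=\; \bigl(\hat C_k \setminus \mathcal A|_k\bigr) \cup \mathcal B|_k, \]
where $\mathcal A|_k = \bigcup\{s^I_J U^I_J : p^I_J = k\}$ and $\mathcal B|_k = \bigcup\{t^{I'}_{J'} V^{I'}_{J'} : q^{I'}_{J'} = k\}$. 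Each $x_k$ is compact open (by clopenness of $\mathcal A|_k$ and $\hat C_k$), and satisfies $x_k \subset \hat C_k$, so $x = x' := (x_1, \dots, x_s)$ is a tuple of compact open sets, hence a compact element of $\CW(X, \Gamma)$.

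\paragraph{Verification and key insight.}
The clauses of (W5) follow from the construction. (i) $x' \prec x$ is immediate from compactness of $x$. (ii) $b' \prec x'$: the $\mathcal B$-translates give a subequivalence $b'' \preccurlyeq x$ (since $\mathcal B|_k \subset x_k$ by construction), and combined with $b' \preccurlyeq b''$ and the compactness of $x$ this upgrades to $b' \prec x$. (iii) $c \prec a + x'$: using $c \preccurlyeq \hat c$, each translate $u^i_j W^i_j \subset \hat C_k$ of a $c$-cover element is split by the clopen partition $\hat C_k = \mathcal A|_k \sqcup (\hat C_k \setminus \mathcal A|_k)$; the $\mathcal A|_k$-piece, further refined into clopen pieces of each $s^I_J U^I_J$, gets inverse-translated via $(s^I_J)^{-1}$ back into $A''_I \subset A_I$, while the piece in $\hat C_k \setminus \mathcal A|_k \subset x_k$ stays put. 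This yields $c \preccurlyeq a'' + x \ll a + x$, so $c \prec a + x$. (iv) $a' + x \prec \tilde c$: combine the subequivalence $a'' \preccurlyeq \hat c$ (witnessed by $\mathcal A$) with the componentwise inclusion $x_k \subset \hat C_k$ to get $(a', x) \preccurlyeq (a'', x) \preccurlyeq \hat c \ll \tilde c$, whence $a' + x \prec \tilde c$. The key insight, which avoids the technical obstacle of disjointifying $\mathcal A$ and $\mathcal B$ inside $\hat c$, is the hybrid definition of $x_k$: subtracting $\mathcal A|_k$ creates the necessary room for $a'$ in clause (iv), while retaining $\mathcal B|_k$ (even where it overlaps $\mathcal A|_k$) guarantees $b''$ fits in $x$ in clause (ii). Zero-dimensionality is indispensable throughout---for the clopen partitioning, for taking cover elements compact open, and for ensuring $x_k$ itself is compact open.
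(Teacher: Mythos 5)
Your overall architecture matches the paper's: take $x=x'$ to be a tuple of compact open sets obtained by deleting, from a compact open approximation $\hat c=(\hat C_1,\dots,\hat C_s)$ of $c$, the translates coming from the $a$-part of the subequivalence $(a,b)\preccurlyeq c$. But there is a genuine gap at exactly the point you advertise as your ``key insight.'' If $\mathcal{A}|_k$ and $\mathcal{B}|_k$ are allowed to overlap and you set $x_k=(\hat C_k\setminus\mathcal{A}|_k)\cup\mathcal{B}|_k$, then clause (iv) fails: to witness $(a'',x)\preccurlyeq\hat c$ you send the pieces of $a''$ into $\mathcal{A}|_k$ via the translates $s^I_J$ and send each $x_k$ into $\hat C_k$ identically, but the definition of $\preccurlyeq$ (Definition \ref{defn: subequivalence relation}) requires the translated, labeled pieces to form a \emph{disjoint} union inside $\bigsqcup_k \hat C_k\times\{k\}$, and the image of $x_k$ contains $\mathcal{B}|_k$, which by your own assumption may meet $\mathcal{A}|_k$. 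There is no guaranteed room in $\hat C_k$ to re-route the overlap, so the hybrid definition does not rescue the argument; it merely trades a failure of clause (ii) for a failure of clause (iv).

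The fix is precisely the point you tried to sidestep: you do not need to ``disjointify'' $\mathcal{A}$ and $\mathcal{B}$ by hand, because Definition \ref{defn: subequivalence relation} already does it for you. Apply the subequivalence $(a,b)\preccurlyeq \hat c$ \emph{once}, to the concatenated $(m+n)$-tuple $(A''_1,\dots,A''_m,B''_1,\dots,B''_n)$: the displayed inclusion in that definition is a disjoint union over all indices $i\leq m+n$ simultaneously, so the resulting families $\mathcal{A}$ and $\mathcal{B}$ are automatically pairwise disjoint inside $\bigsqcup_k\hat C_k\times\{k\}$. Then $\mathcal{B}|_k\subset \hat C_k\setminus\mathcal{A}|_k$, your hybrid $x_k$ collapses to $\hat C_k\setminus\mathcal{A}|_k$ (which is what the paper takes, with $R_p$ playing the role of $\mathcal{A}|_p$ and $Q_p\subset H_p$ the role of $\mathcal{B}|_k\subset x_k$), and all four clauses go through. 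A further small point the paper is careful about and you gloss over in clause (iii): when inverse-translating the pieces of $\mathcal{A}|_k$ back into $A''_I$, you need the cover $\{U^I_J\}_J$ of $A''_I$ to consist of pairwise disjoint compact open sets (arranged via zero-dimensionality, as the paper does with the disjoint families $\{K^i_j\}_j$), since otherwise the inverse translates of disjoint pieces of $\hat C_k$ need not land disjointly in $A''_I$.
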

\begin{proof}
Let $[a'], [a], [b'], [b], [c], [\tilde{c}]\in \CW(X, \Gamma)$ be such that $[a]+[b]\preceq [c]$, $[a']\preceq [a]$, $[b']\preceq [b]$ and $[c]\preceq [\tilde{c}]$. First write $a=(O_1, \dots, O_m)$ and $b=(O_{m+1}, \dots, O_{m+n})$. Then since $X$ is zero-dimensional, there are compact open sets $F_i\subset O_i$ for $i\leq m+n$ such that $a'\preccurlyeq (F_1, \dots, F_m)$ and $b'\preccurlyeq (F_{m+1}, \dots, F_{m+n})$. In addition,  there is an element $c'=(N_1, \dots, N_l)$ in which all $N_p$ are also compact open such that $c\preccurlyeq c'\ll_l \tilde{c}$. Because $(a, b)\preccurlyeq c$ and $X$ is zero-dimensional, for each $i\leq m+n$ and $F_i\subset O_i$, there are compact open sets $\{K^i_j: 1\leq j\leq J_i, 1\leq i\leq m+n\}$, group elements $\{s^i_j\in \Gamma: 1\leq j\leq J_i, 1\leq i\leq m+n\}$ and positive integers $\{k^i_j\leq l: 1\leq j\leq J_i, 1\leq i\leq m+n\}$ such that $\{K^i_j: 1\leq j\leq J_i\}$ is a disjoint family and $F_i\subset \bigsqcup_{j=1}^{J_i}K^i_j\subset O_i$ for each $1\leq i\leq m+n$ as well as
\[\bigsqcup_{i=1}^{m+n}\bigsqcup_{j=1}^{J_i}s^i_jK^i_j\times \{k^i_j\}\subset \bigsqcup_{p=1}^{l}N_p\times \{p\}. \tag{\ensuremath{\bigstar}}\label{eq1}\]
On the other hand, for each $p\leq l$, write $R_p=\bigsqcup\{s^i_jK^i_j: 1\leq i\leq m, 1\leq j\leq J_i, k^i_j=p\}$ for simplicity and define  $H_p=N_p\setminus R_p$, which is still a compact open set. Now define  $x=x'=(H_1, \dots, H_l)$.  Note that $[x']\preceq [x]$ since all $H_p$ are compact open.

Now, let $1\leq p\leq l$. For any compact set $T_p\subset N_p$, observe that $T_p\subset (T_p\cap R_p)\cup H_p$. For any $1\leq i\leq m$ and $1\leq j\leq J_i$ with $k^i_j=p$, define $t^p_{i, j}=(s^i_j)^{-1}\in \Gamma$, $L^p_{i, j}=s^i_jK^i_j$ and $d^p_{i, j}=i$. Then  one actually has 
\[\bigsqcup_{p=1}^l\bigsqcup_{i=1}^{m}\bigsqcup_{j=1}^{J_i}t^p_{i, j}L^p_{i, j}\times \{d^p_{i, j}\}=\bigsqcup_{i=1}^{m}(\bigsqcup_{j=1}^{J_i}K^i_j)\times \{i\}\subset \bigsqcup_{i=1}^{m}O_i\times \{i\}\]
while $T_p\cap R_p\subset R_p= \bigsqcup\{L^p_{i, j}: 1\leq i\leq m, 1\leq j\leq J_i, k^i_j=p\}$. This implies that $c\preccurlyeq (\bigsqcup_{j=1}^{J_1}K^1_j, \dots,\bigsqcup_{j=1}^{J_m}K^m_j, H_1, \dots, H_l)\ll_{m+l} (a, x')$, which means that $[c]\preceq [a]+[x']$. On the other hand,  observe that 
\[(a', x)\preccurlyeq (F_1, \dots, F_m, H_1, \dots, H_l)\preccurlyeq (R_1, \dots, R_l, H_1, \dots, H_l)\preccurlyeq c'\ll_l \tilde{c}\] because $R_p\sqcup H_p=N_p$ for all $p\leq l$. This implies that $[a']+[x]\preceq [\tilde{c}]$.

Finally, for each $p\leq l$, define  $Q_p=\bigsqcup\{s^i_jK^i_j: m+1\leq i\leq m+n, 1\leq j\leq J_i, k^i_j=p\}$, which is a compact open set. Observe that $Q_p\subset H_p$ by (\ref{eq1}), which implies that 
\[b'\preccurlyeq (F_{m+1}, \dots, F_{m+n})\preccurlyeq (Q_1, \dots, Q_l)\ll_l(H_1, \dots, H_l)=x'.\]
This entails that $[b']\preceq [x']\preceq [x]$.
\end{proof}

To summarize, by combining the results in \cite[Theorem 4.4]{APT} and Corollary \ref{cor: main1}, we have the following.
\begin{thm}
The semigroup $\CW(X, \Gamma)$ satisfies \textnormal{(W1)-(W4)} and \textnormal{(W6)} and thus its completion $\gamma(\CW(X, \Gamma))$ is a \textnormal{Cu}-semigroup and satisfies \textnormal{(O6)}. If $X$ is a zero-dimensional space, the semigroup $\CW(X, \Gamma)$ additionally satisfies \textnormal{(W5)} and thus its completion $\gamma(\CW(X, \Gamma))$ additionally satisfies \textnormal{(O5)}.
\end{thm}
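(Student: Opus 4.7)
The plan is to assemble this as a summary theorem rather than to prove anything new: all four substantive claims in the statement have been established in the preceding results, and what remains is to cite them and invoke the completion functor.

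First, I would record that $\CW(X, \Gamma)$ satisfies (W1)--(W4) with respect to the auxiliary relation $\preceq$ by Corollary \ref{cor: main1}. This uses Theorem \ref{thm: K is W semigroup}, where the axioms (W1), (W3), (W4) are verified directly for the relation $\ll$ on $\CK(X, \Gamma)$ (by choosing precompact open exhaustions of the component open sets), and then the transfer to $\prec$ on $\CK(X, \Gamma)$ and finally to $\preceq$ on the quotient $\CW(X, \Gamma)$ is handled by Lemma \ref{lem: uniform}(ii) combined with Proposition \ref{prop: relation}.

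Next, I would quote the two propositions from Section 3: the unconditional proposition yields (W6) for $\CW(X, \Gamma)$, and the zero-dimensional proposition yields (W5) when $X$ has a basis of compact open sets. Together with (W1)--(W4) from the preceding paragraph, this establishes the first halves of both statements of the theorem.

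For the final step, I would apply \cite[Theorem 4.4]{APT}, which says that the completion functor $\gamma \colon \textnormal{W}\to \textnormal{Cu}$ sends a W-semigroup satisfying (W6) to a Cu-semigroup satisfying (O6), and sends one satisfying (W5) to one satisfying (O5). Applying this to $\CW(X, \Gamma)$ gives that $\gamma(\CW(X, \Gamma))$ is a Cu-semigroup satisfying (O6) in general, and additionally satisfying (O5) when $X$ is zero-dimensional. Since every step is a direct citation of a previously established result, there is no genuine obstacle; the only care needed is to state the invocation of \cite[Theorem 4.4]{APT} in the correct form and to make explicit that (W1)--(W4) (needed to even speak of $\gamma$) come from Corollary \ref{cor: main1} rather than from either proposition in Section 3.
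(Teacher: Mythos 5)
Your proposal is correct and matches the paper exactly: the theorem is a summary statement assembled by citing Corollary \ref{cor: main1} for (W1)--(W4), the two propositions of Section 3 for (W6) and (for zero-dimensional $X$) (W5), and \cite[Theorem 4.4]{APT} to pass these to (O6) and (O5) for the completion $\gamma(\CW(X,\Gamma))$. No further argument is needed.
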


\section{Acknowledgment}
The author should like to thank Hanfeng Li for his valuable comments and corrections. In addition, the author should like to thank the referee for the careful reading and very helpful suggestions, which lead to the current more conceptual approach in establishing Corollary \ref{cor: main1}. These have improved the paper a lot.

\end{document}